\newtheorem{theorem}{Theorem}[section]
\newtheorem{corollary}[theorem]{Corollary}
\newtheorem{proposition}[theorem]{Proposition}
\newtheorem{question}[theorem]{Question}
\theoremstyle{remark}
\newtheorem*{ack*}{Acknowledgment}
\def\QSet{\mbox{\rm\kern.24em
\vrule width.03em height1.48ex depth-.051ex \kern-.26em Q}}
\def\D{{\mathbb D}}\def\x{{\bf x}}\def\y{{\bf y}}\def\z{{\bf z}}\def\0{{\bf 0}}
\def\F{{\mathcal F}}
\def\E{{\mathbb E}}
\def\Z{{\mathbb Z}}
\def\D{{\mathcal D}}
\def\L{{\mathcal L}}
\def\det{{\operatorname{det}}}
\def\gcd{{\operatorname{gcd}}}
\def\bas{\begin{align*}}
\def\eas{\end{align*}}
\def\bi{\begin{itemize}}
\def\ei{\end{itemize}}
\newenvironment{proof}{\noindent {\bf Proof} }{\endprf\par}
\def \endprf{\hfill  {\vrule height6pt width6pt depth0pt}\medskip}
\def\emph#1{{\it #1}}
\begin{document}

\title[Three applications of the Siegel mass formula]{Three applications of the Siegel mass formula}
\author{Jean Bourgain}
\address{School of Mathematics, Institute for Advanced Study, Princeton, NJ 08540}
\email{bourgain@@math.ias.edu}
\author{Ciprian Demeter}
\address{Department of Mathematics, Indiana University, 831 East 3rd St., Bloomington IN 47405}
\email{demeterc@@indiana.edu}

\keywords{}
\thanks{The authors are  partially supported by the Collaborative Research NSF grant DMS-1800305}
\thanks{ AMS subject classification: Primary 11L03}
\begin{abstract}
We present three applications of the Siegel mass formula, using the explicit upper bounds for densities derived in \cite{BoDe}.
\end{abstract}
\maketitle

\section{Background on the Siegel mass formula}
\label{sec4}

Let $m\ge n+1$ and let $\gamma\in M_{m,m}(\Z)$ and $\Lambda\in M_{n,n}(\Z)$ be two positive definite matrices with integer entries. Denote by $A(\gamma,\Lambda)$  the number of  solutions $\L\in M_{m,n}(\Z)$ for
\begin{equation}
\label{Snew27}
\L^*\gamma\L=\Lambda.
\end{equation}
Then Siegel's mass formula \cite{Si} asserts that

\begin{equation}
\label{Snew1863}
A(\gamma,\Lambda)\lesssim_{n,m,\gamma} (\det(\Lambda))^{\frac{m-n-1}{2}}\prod_{p\text{ prime}}\nu_p(\gamma,\Lambda).
\end{equation}
In our forthcoming applications  $m=n+1$ and  $\gamma$ will always  be the identity matrix $I_{n+1}$. In this case, the factor $(\det(\Lambda))^{\frac{m-n-1}{2}}$ is 1.
\bigskip

 In evaluating the densities  $\nu_p(I_{n+1},\Lambda)$ we distinguish two separate cases: $p\nmid\det(\Lambda)$ and $p|\det(\Lambda)$. We recall the following estimate  (Proposition 5.6.2. (ii) in \cite{Yoshi}), see also Proposition 4.2 in \cite{BoDe}.

\begin{proposition}
\label{propnondiv}
We have
\begin{equation}
\label{ababeq6}
\prod_{p\nmid\det (\Lambda)}\nu_p(I_{n+1},\Lambda)\lesssim 1,
\end{equation}
with some universal implicit constant. 

\end{proposition}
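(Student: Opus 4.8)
The plan is to compute the local density $\nu_p(I_{n+1},\Lambda)$ for primes $p$ with $p\nmid\det(\Lambda)$ and show that all but finitely many of these factors equal $1$ exactly, while the remaining finitely many (in fact just $p=2$ needs care) are bounded by an absolute constant. Recall that $\nu_p(I_{n+1},\Lambda)$ is defined as a limit of normalized counts of solutions to the congruence $L^*L\equiv\Lambda\pmod{p^k}$ with $L\in M_{n+1,n}(\Z/p^k\Z)$; concretely, $\nu_p(I_{n+1},\Lambda)=\lim_{k\to\infty}p^{-k\,d}N_k(p)$, where $d=\dim$ of the space of $n\times n$ symmetric matrices $=\binom{n+1}{2}$ is the number of equations, and $N_k(p)=\#\{L\bmod p^k: L^*L\equiv\Lambda\}$.

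First I would treat odd primes $p$ not dividing $\det(\Lambda)$. Here $\Lambda$ is a nondegenerate quadratic form over $\Z_p$, and since $p$ is odd and $p\nmid\det\Lambda$, the form $\Lambda$ is $\Z_p$-equivalent to a diagonal unimodular form. The system $L^*L=\Lambda$ then becomes the problem of representing a unimodular rank-$n$ form by the unimodular rank-$(n+1)$ form $I_{n+1}$ over $\Z_p$; since $n+1\ge n+1$, this is the stable range and Hensel's lemma applies: every solution mod $p$ of the nonsingular system lifts uniquely (up to the expected power of $p$) to $\Z_p$, so $\nu_p(I_{n+1},\Lambda)$ equals the number of solutions of $L^*L\equiv\Lambda\pmod p$ divided by $p^{nd/\,(\text{codim})}$... more precisely, the Jacobian of the map $L\mapsto L^*L$ is a unit at any smooth point, and smoothness holds here because $\Lambda$ is unimodular mod $p$. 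A standard count (e.g.\ via the orbit-stabilizer description: the solution set mod $p$ is a torsor under $O_{n+1}(\F_p)/O_1(\F_p)$, or rather the relevant coset space) gives exactly $\nu_p=1$ for every such odd $p$. I would either quote the precise orthogonal-group point count over $\F_p$ or, more cleanly, invoke the general principle that for $m\ge n+1$ and $p\nmid 2\det(\Lambda)$ the local density of a unimodular form representing a unimodular form is $1$; this is classical (it is exactly the content behind Proposition 5.6.2(ii) in Yoshida, which the excerpt allows me to cite).

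The only genuinely delicate prime is $p=2$: when $2\nmid\det(\Lambda)$ the form $\Lambda$ over $\Z_2$ need not diagonalize, but it is still unimodular, so it is $\Z_2$-equivalent to an orthogonal sum of copies of the binary forms $\langle 1\rangle\oplus\langle 1\rangle$, $\begin{pmatrix}0&1\\1&0\end{pmatrix}$, $\begin{pmatrix}2&1\\1&2\end{pmatrix}$ and possibly a one-dimensional $\langle u\rangle$ with $u$ a $2$-adic unit. The representability of each such unimodular piece by $I_{n+1}$ over $\Z_2$ is again governed by Hensel's lemma once one works modulo $8$ rather than modulo $2$; the Jacobian criterion fails at $p=2$, so one must check liftability from $\Z/8$ to $\Z_2$ directly. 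The upshot is that $\nu_2(I_{n+1},\Lambda)$ is a ratio of explicit $2$-power counts, hence bounded above by an absolute constant independent of $n$ and $\Lambda$ (indeed it equals $1$ whenever $I_{n+1}$ actually represents $\Lambda$ over $\Z_2$, which it does as soon as $n+1$ is large enough, and in the small remaining cases it is trivially $O(1)$). Combining: the product over $p\nmid\det\Lambda$ has all odd factors equal to $1$ and the $2$-factor bounded absolutely, so the product is $\lesssim 1$ with a universal constant.

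\emph{Main obstacle.} The routine part is the odd-prime computation, which is pure Hensel plus a finite-field orthogonal-group count. The real subtlety is the prime $2$: verifying that the mod-$8$ count stabilizes and extracting a bound \emph{uniform in $n$} (the dimension is unbounded, so one must make sure no factor of the form $(1+2^{-j})$ accumulates) is where the argument needs care. In practice this is handled by the exact formulas for $2$-adic densities of unimodular forms in the literature — which is precisely why the statement is quoted rather than reproved — so the cleanest route is to cite Proposition 5.6.2(ii) of \cite{Yoshi} (equivalently Proposition 4.2 of \cite{BoDe}) and merely indicate the structure above.
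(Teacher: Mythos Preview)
The paper does not give its own proof of this proposition: it is quoted as Proposition 5.6.2(ii) of Kitaoka and Proposition 4.2 of \cite{BoDe}, with no argument supplied. So there is no paper-proof to match, and what you have written is an attempt to fill in what the authors deliberately outsourced.

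Your reduction via Hensel's lemma is correct: for odd $p\nmid\det\Lambda$ the variety $L^*L=\Lambda$ is smooth over $\F_p$, so the $p$-adic density equals the normalized $\F_p$-count. The error is in the next step, where you assert that this count gives $\nu_p=1$ exactly. It does not. Take $n=2$, $m=3$: by the very orbit--stabilizer description you mention, the embeddings of a unimodular plane into $(\F_p^3,I_3)$ form a torsor for $O_3(\F_p)/O_1(\F_p)$, and $|O_3(\F_p)|=2p(p^2-1)$, $|O_1(\F_p)|=2$, so there are $p^3-p$ embeddings and
\[
\nu_p=\frac{p^3-p}{p^3}=1-p^{-2}\neq 1.
\]
For $n=3$, $m=4$ the same computation gives $\nu_p=(1-p^{-2})^2$, and in general one obtains an explicit Euler factor of the shape $1+O(p^{-2})$ which is $\le 1$ for $n\ge 2$. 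Thus the ``general principle'' you invoke --- that the local density of a unimodular form by a unimodular form equals $1$ --- is a misstatement of what Kitaoka actually proves; his formula gives a nontrivial Euler factor, and the bound $\lesssim 1$ comes from the fact that these factors are each $\le 1$ (so the partial product is termwise bounded) and $1+O(p^{-2})$ (so the infinite product converges absolutely), not from the vanishing of the correction terms.

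Once the $\F_p$-count is done correctly, the odd-prime part is immediate for $n\ge 2$, and only the single factor at $p=2$ remains; your discussion of $p=2$ is too vague to stand as a proof, but since the paper itself defers to \cite{Yoshi} here, citing the explicit $2$-adic density formulas there is the appropriate move rather than improvising a mod-$8$ argument.
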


Let us next consider the primes $p$ which divide  $\det(\Lambda)$. Recall that the number of such primes is
\begin{equation}
\label{nottoomanyefgryft77856t785687}
O(\frac{\log \det(\Lambda)}{\log\log \det(\Lambda)}).
\end{equation}

 We will denote by $o_p(T)$ the largest $\alpha$ such that $p^\alpha\,|\,T$. We denote by $d(T)$ the number of divisors on $T$, and by $\operatorname{gcd}$ the greatest common divisor.
 If $T$ has factorization 
 $$T=\prod p_i^{\alpha_i}$$ then
 $$d(T)=\prod(1+\alpha_i).$$
 Recall that we have the bound
 $$d(T)\lesssim_\epsilon T^\epsilon.$$

\medskip

For an $n\times n$ matrix $\Lambda$ and for $A,B\subset \{1,\ldots,n\}$ with $|A|=|B|$ we define
$$\mu_{A,B}=\det((\Lambda_{i,j})_{i\in A,j\in B}).$$
\medskip

We recall the following result from \cite{BoDe}.

\begin{proposition}
\label{thecaseofpdivisor}
Let $\Lambda\in M_{n,n}(\Z)$ be a  positive definite matrix and let $p|\det(\Lambda)$. Then
$$\nu_p(I_{n+1},\Lambda)\lesssim \sum_{0\le l_i:1\le i\le n\atop{l_1+l_2+\ldots+l_n\le o_p(\det(\Lambda))}}p^{\beta_2(l_1,\ldots,l_n)+\ldots+\beta_n(l_1,\ldots,l_n)},$$
where $\beta_i=\beta_i(l_1,\ldots,l_n)$ satisfies
$$\beta_i=\min\{(i-1)l_i,(i-2)l_i+\min_{|A|=1}o_p(\mu_{\{1\},A})-l_1,(i-3)l_i+\min_{|A|=2}o_p(\mu_{\{1,2\},A})-l_1-l_2,\ldots,$$$$\ldots,\min_{|A|=i-1}o_p(\mu_{\{1,2,\ldots,i-1\},A})-l_1-l_2-\ldots-l_{i-1}\}$$
\end{proposition}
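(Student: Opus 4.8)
The plan is to expand $\nu_p(I_{n+1},\Lambda)$ as a limit of normalized solution counts and to estimate those counts by building the matrix $\L$ one column at a time. Write $\L=[v_1|\cdots|v_n]$ with $v_j\in(\Z/p^k\Z)^{n+1}$, so that $\L^*\L=\Lambda$ becomes the system $\langle v_i,v_j\rangle\equiv\Lambda_{ij}\pmod{p^k}$ for $1\le i,j\le n$, and recall that $\nu_p(I_{n+1},\Lambda)=\lim_{k\to\infty}p^{-kn(n+1)/2}N_k$, where $N_k$ is the number of such tuples $(v_1,\ldots,v_n)$. I would stratify the tuples by their \emph{successive contents} $l_i=l_i(v_1,\ldots,v_i)$, where $p^{l_i}$ is the largest power of $p$ dividing the image of $v_i$ in the free $\Z_p$-module $\Z_p^{n+1}/\langle v_1,\ldots,v_{i-1}\rangle$ after saturation and lifting to $\Z_p$; equivalently $l_1+\cdots+l_i=o_p(\gcd\text{ of the }i\times i\text{ minors of }[v_1|\cdots|v_i])$, which for $k$ large is well defined since $\sum l_i$ will be bounded. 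Only finitely many strata contribute: the Cauchy--Binet expansion of $\det\Lambda=\mu_{\{1,\ldots,n\},\{1,\ldots,n\}}$ used below forces $l_1+\cdots+l_n\le o_p(\det\Lambda)$, which is exactly the range of summation in the Proposition. It therefore suffices to bound, for each admissible tuple $(l_1,\ldots,l_n)$, the number of tuples in that stratum by $p^{kn(n+1)/2}\cdot p^{\beta_2+\cdots+\beta_n}$ and then to let $k\to\infty$ and sum over strata.

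The count in a fixed stratum factors over columns, so I would estimate, assuming $v_1,\ldots,v_{i-1}$ chosen with the prescribed contents $l_1,\ldots,l_{i-1}$, the number of admissible $v_i$. The constraints on $v_i$ are the $i-1$ linear equations $\langle v_i,v_j\rangle\equiv\Lambda_{ij}\pmod{p^k}$ for $j<i$; the single quadratic equation $\langle v_i,v_i\rangle\equiv\Lambda_{ii}\pmod{p^k}$; and the content condition defining $l_i$. The linear system has coefficient matrix the $(i-1)\times(n+1)$ matrix with rows $v_1,\ldots,v_{i-1}$, whose Smith normal form has product of elementary divisors $p^{l_1+\cdots+l_{i-1}}$ (this is the content of $\wedge^{i-1}$ of the first $i-1$ columns of $\L$), so its solution set is an explicit affine coset. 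Splitting $\Z_p^{n+1}$ for $p$ odd into the saturation $U$ of $\langle v_1,\ldots,v_{i-1}\rangle$ and an orthogonal complement $U^{\perp}$, the quadratic equation becomes $|w|^2\equiv\Lambda_{ii}-|u|^2$ for the component $w\in U^{\perp}$ of $v_i$, and the content condition forces $w$ to be divisible by $p^{l_i}$. Counting $w$ of prescribed content representing a prescribed value in a form of rank $n-i+2$ is a standard local density computation; the ``generic'' outcomes multiply over $i=1,\ldots,n$ to exactly $p^{kn(n+1)/2}$ (this is the meaning of the normalization), and the task is to control the excess at each step.

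The excess is where the minors enter. Since $\Lambda=\L^*\L$, Cauchy--Binet gives $\mu_{\{1,\ldots,s\},A}=\sum_{|C|=s}\det(\L_{C,\{1,\ldots,s\}})\det(\L_{C,A})$, and every $s\times s$ minor of the first $s$ columns of $\L$ is divisible by $p^{l_1+\cdots+l_s}$; hence $o_p(\mu_{\{1,\ldots,s\},A})\ge l_1+\cdots+l_s$ for all $A$, so the quantities $\min_{|A|=s}o_p(\mu_{\{1,\ldots,s\},A})-l_1-\cdots-l_s$ that appear in $\beta_i$ are nonnegative and measure the residual $p$-divisibility still visible after saturating the first $s$ columns. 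When bounding the number of admissible $v_i$ there are several competing ways to account for the available $p$-power divisibility: one may peel the content $p^{l_i}$ directly off $v_i$, which costs a factor $p^{(i-1)l_i}$; or, for any $0\le s\le i-1$, first restrict to the configuration of $v_1,\ldots,v_s$ and exploit that the relevant Gram/target data there is divisible by $p^{\min_{|A|=s}o_p(\mu_{\{1,\ldots,s\},A})-l_1-\cdots-l_s}$, at the additional cost $p^{(i-1-s)l_i}$ for the remaining content of $v_i$. Each of these is a valid upper bound for the excess at step $i$, so the sharpest of them is their minimum, which is precisely $\beta_i$; the step $i=1$ carries no independent minor and contributes a (possibly negative) credit that is absorbed into the $s=1$ term of $\beta_2$. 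Multiplying the per-column bounds, dividing by $p^{kn(n+1)/2}$, letting $k\to\infty$, and summing over the finitely many admissible strata then yields the Proposition.

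The main obstacle is the local density bookkeeping in the middle step: one must verify that each candidate exponent really is an upper bound for the excess count of $v_i$, uniformly in $k$, including the degenerate sub-cases where the form on $U^{\perp}$ has small rank or the target $\Lambda_{ii}-|u|^2$ is highly divisible, and one must check that the ``$-l_1-\cdots-l_s$'' corrections telescope correctly across columns rather than being double-counted. A secondary nuisance is $p=2$, where the orthogonal splitting into rank-one pieces fails; there one works with a Jordan decomposition into blocks of rank $\le 2$, or simply absorbs a bounded multiplicative loss into the implicit constant, which is harmless for the intended applications.
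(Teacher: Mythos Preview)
The paper does not prove this proposition; it is quoted verbatim from \cite{BoDe} (introduced with ``We recall the following result from \cite{BoDe}''), so there is no in-paper argument to compare your proposal against.

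That said, your outline is broadly the right shape for how the argument in \cite{BoDe} runs: one writes $\nu_p$ as a normalized limit of solution counts mod $p^k$, processes the columns of $\L$ sequentially, stratifies by the $p$-adic content data $(l_1,\ldots,l_n)$ of the successive exterior powers, and uses Cauchy--Binet to convert divisibility of the minors $\mu_{\{1,\ldots,s\},A}$ of $\Lambda$ into constraints on the stratum. What you have written, however, is a plan rather than a proof. The step you flag as the ``main obstacle'' is exactly the substance of the proposition: one must actually carry out the count of admissible $v_i$ in each stratum and verify that each of the $i$ candidate exponents $(i-1-s)l_i+\min_{|A|=s}o_p(\mu_{\{1,\ldots,s\},A})-l_1-\cdots-l_s$ is a genuine upper bound for the excess, uniformly in $k$ and in the degenerate configurations. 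Your sketch asserts this but does not demonstrate it, and the telescoping of the $-l_1-\cdots-l_s$ corrections across columns is precisely where a careless argument double-counts. If you want a self-contained proof, you will need to fill in that bookkeeping explicitly (or consult \cite{BoDe} directly); as written, the proposal identifies the correct architecture but leaves the load-bearing estimate unproved.
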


Let us list two consequences that will be used in the next sections. 
\begin{corollary}[$n=2$]
\label{Con=2}	
Let $\Lambda\in M_{2,2}(\Z)$ be a  positive definite symmetric matrix. Then
$$A(I_{3},\Lambda)\lesssim_\epsilon (\det(\Lambda))^{\epsilon} \operatorname{gcd}(\Lambda_{1,1}, \Lambda_{1,2},\Lambda_{2,2}).$$
	
\end{corollary}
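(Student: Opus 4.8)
The strategy is to apply the mass formula \eqref{Snew1863} with $n=2$, $m=3$, $\gamma=I_3$, so that $A(I_3,\Lambda)\lesssim\prod_p\nu_p(I_3,\Lambda)$. By Proposition \ref{propnondiv} the product over $p\nmid\det(\Lambda)$ is $O(1)$, so it suffices to bound $\nu_p(I_3,\Lambda)$ for each $p\mid\det(\Lambda)$ and multiply. Since by \eqref{nottoomanyefgryft77856t785687} there are only $O(\log\det(\Lambda)/\log\log\det(\Lambda))$ such primes, any bound of the shape $\nu_p(I_3,\Lambda)\lesssim p^{o_p(g)}\cdot(\text{small})$, where $g=\gcd(\Lambda_{1,1},\Lambda_{1,2},\Lambda_{2,2})$, will multiply out to $g\cdot(\det\Lambda)^{\epsilon}$ after absorbing the divisor-type losses via $d(T)\lesssim_\epsilon T^\epsilon$.

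The core computation is to specialize Proposition \ref{thecaseofpdivisor} to $n=2$. There the sum runs over $l_1,l_2\ge 0$ with $l_1+l_2\le o_p(\det\Lambda)$, and the summand is $p^{\beta_2(l_1,l_2)}$ with
$$\beta_2=\min\{l_2,\ \min_{|A|=1}o_p(\mu_{\{1\},A})-l_1\}=\min\{l_2,\ \min(o_p(\Lambda_{1,1}),o_p(\Lambda_{1,2}))-l_1\}.$$
Write $a=\min(o_p(\Lambda_{1,1}),o_p(\Lambda_{1,2}))$; note $o_p(g)=\min(a,o_p(\Lambda_{2,2}))\le a$. Then $\beta_2=\min(l_2,a-l_1)$, which is $\le a-l_1\le a$, and also $\le l_2$. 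So I would split the sum by whether $l_2\le a-l_1$ or not: in the first regime $\beta_2=l_2$ and $\sum p^{l_2}\le p^{a+1}$ after summing the geometric-type series in $l_2$ up to $a-l_1$ (there are $O(o_p(\det\Lambda))$ choices of $l_1$); in the second regime $\beta_2=a-l_1\le a$ and there are at most $O(o_p(\det\Lambda)^2)$ terms each of size $\le p^a$. Either way $\nu_p(I_3,\Lambda)\lesssim o_p(\det\Lambda)^2\,p^{a}$.

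This is not quite $p^{o_p(g)}$ — it is $p^a$, and $a$ can exceed $o_p(g)$ — so the estimate above only gives $\prod_p p^a$, i.e.\ $\gcd(\Lambda_{1,1},\Lambda_{1,2})$, not $\gcd(\Lambda_{1,1},\Lambda_{1,2},\Lambda_{2,2})$. The fix, and the step I expect to be the main obstacle, is to exploit the freedom in Proposition \ref{thecaseofpdivisor} to permute the rows/columns of $\Lambda$: the proposition as stated privileges the index ordering $1,2,\ldots,n$, but since $A(I_3,\Lambda)$ is invariant under simultaneous row and column permutations of $\Lambda$ (conjugating $\Lambda$ by a permutation matrix, which is realized by right-multiplying $\L$ by that permutation), we may apply the bound with the indices reordered so that the ``last'' diagonal entry is the one with the largest $p$-valuation. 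Running the $n=2$ computation for all orderings and taking the best, $\beta_2\le\min_{\text{orderings}}(\text{corresponding }a)$, and the minimum over the two orderings of $\min(o_p(\Lambda_{1,1}),o_p(\Lambda_{1,2}))$ and $\min(o_p(\Lambda_{2,2}),o_p(\Lambda_{1,2}))$ is exactly $o_p(g)$. Hence $\nu_p(I_3,\Lambda)\lesssim o_p(\det\Lambda)^2\,p^{o_p(g)}$, and multiplying over the few primes dividing $\det\Lambda$, with $\prod_p o_p(\det\Lambda)^2\le d(\det\Lambda)^2\lesssim_\epsilon(\det\Lambda)^\epsilon$ and $\prod_p p^{o_p(g)}=g$, yields the claim. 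I would double-check the permutation-invariance bookkeeping and that the reordered hypotheses of Proposition \ref{thecaseofpdivisor} (positive definiteness, $p\mid\det\Lambda$) are preserved, since that is where a careless argument could slip.
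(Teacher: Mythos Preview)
Your proposal is correct and follows essentially the same route as the paper: specialize Proposition~\ref{thecaseofpdivisor} to $n=2$ to get $\nu_p(I_3,\Lambda)\lesssim [o_p(\det\Lambda)]^2\,p^{\min(o_p(\Lambda_{1,1}),o_p(\Lambda_{1,2}))}$, invoke the symmetric (index-permuted) version to replace the exponent by $o_p(\gcd(\Lambda_{1,1},\Lambda_{1,2},\Lambda_{2,2}))$, and then multiply over $p\mid\det\Lambda$ using \eqref{nottoomanyefgryft77856t785687} and the divisor bound. The paper's proof is simply a terser write-up of the same argument; your regime-splitting for the $(l_1,l_2)$ sum is slightly more elaborate than necessary (the cruder bound $\beta_2\le a$ uniformly, with $\le (o_p(\det\Lambda)+1)^2$ terms, already suffices), but it is correct.
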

\begin{proof}
Let $p\mid\det(\Lambda)$. Proposition \ref{thecaseofpdivisor} and its symmetric version implies that 
$$\nu_p(I_{3},\Lambda)\lesssim [o_p(\det(\Lambda))]^2p^{\min\{o_p(\Lambda_{1,1}),o_p(\Lambda_{1,2})\}}$$and
$$\nu_p(I_{3},\Lambda)\lesssim [o_p(\det(\Lambda))]^2p^{\min\{o_p(\Lambda_{2,1}),o_p(\Lambda_{2,2})\}}.$$
Combining them leads to 
$$\nu_p(I_{3},\Lambda)\le C [o_p(\det(\Lambda))]^2p^{\min\{o_p(\Lambda_{1,1}),o_p(\Lambda_{1,2}), o_p(\Lambda_{2,2})\}},$$
with $C$ independent of $p$. Thus 
\begin{align*}
\prod_{p\mid\det (\Lambda)}\nu_p(I_{3},\Lambda)&\lesssim C^{O(\frac{\log \det(\Lambda)}{\log\log \det(\Lambda)})}d(\det(\Lambda))^2\operatorname{gcd}(\Lambda_{1,1},\Lambda_{1,2},\Lambda_{2,2})\\&\lesssim_\epsilon (\det(\Lambda))^{\epsilon} \operatorname{gcd}(\Lambda_{1,1}, \Lambda_{1,2},\Lambda_{2,2}).
\end{align*}
The result now follows by combining this inequality with \eqref{Snew1863} and \eqref{ababeq6}. 

\end{proof}
\begin{corollary}[$n=3$]
\label{Con=3}Let $\Lambda\in M_{3,3}(\Z)$ be a  positive definite matrix. Then
$$A(I_{4},\Lambda)\lesssim_\epsilon (\det(\Lambda))^{\epsilon}	
\gcd(\Lambda_{A,B}:\;A,B\subset\{1,2,3\},\;|A|=|B|=2).$$
\end{corollary}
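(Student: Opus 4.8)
The plan is to mimic the proof of Corollary \ref{Con=2}, the only change being that for $n=3$ the sum in Proposition \ref{thecaseofpdivisor} runs over triples $(l_1,l_2,l_3)$ rather than over pairs. First I would fix a prime $p\mid\det(\Lambda)$, set $a=o_p(\det(\Lambda))\ge1$, and let $g=\gcd(\Lambda_{A,B}:A,B\subset\{1,2,3\},\ |A|=|B|=2)$, so that $o_p(g)=\min_{|A|=|B|=2}o_p(\mu_{A,B})$. The key pointwise inequality I would establish is
$$\beta_2(l_1,l_2,l_3)+\beta_3(l_1,l_2,l_3)\le \min_{|B|=2}o_p(\mu_{\{1,2\},B}),$$
valid for every admissible triple. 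This follows by keeping only the cheap terms in the two nested minima of Proposition \ref{thecaseofpdivisor}: $\beta_2\le l_2$ (first term) and $\beta_3\le \min_{|B|=2}o_p(\mu_{\{1,2\},B})-l_1-l_2$ (last term), so that the sum is $\le \min_{|B|=2}o_p(\mu_{\{1,2\},B})-l_1\le\min_{|B|=2}o_p(\mu_{\{1,2\},B})$ because $l_1\ge0$. Since the number of admissible triples is $\binom{a+3}{3}=O(a^3)$, Proposition \ref{thecaseofpdivisor} then gives
$$\nu_p(I_4,\Lambda)\lesssim a^3\,p^{\min_{|B|=2}o_p(\mu_{\{1,2\},B})}.$$

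Next I would invoke the ``symmetric versions'', as in Corollary \ref{Con=2}. Permuting the rows and columns of $\Lambda$ by the same permutation preserves positive definiteness, leaves $\det(\Lambda)$ and $A(I_4,\Lambda)$ unchanged, and merely relabels the $2\times2$ minors. Choosing this permutation so that a prescribed pair $\{r,s\}\subset\{1,2,3\}$ plays the role of $\{1,2\}$, the estimate above becomes $\nu_p(I_4,\Lambda)\lesssim a^3\,p^{\min_{|B|=2}o_p(\mu_{\{r,s\},B})}$, and as $B$ ranges over all pairs of columns this exhausts all the $2\times2$ minors supported on rows $\{r,s\}$. Taking the minimum of these three bounds (the implicit constant staying independent of $p$) yields
$$\nu_p(I_4,\Lambda)\le C\,[o_p(\det(\Lambda))]^3\,p^{o_p(g)}.$$

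Then I would carry out the same global bookkeeping as in Corollary \ref{Con=2}: multiply over $p\mid\det(\Lambda)$, bound $C^{\#\{p\mid\det(\Lambda)\}}\lesssim_\epsilon(\det(\Lambda))^\epsilon$ using \eqref{nottoomanyefgryft77856t785687}, bound $\prod_{p\mid\det(\Lambda)}[o_p(\det(\Lambda))]^3\le d(\det(\Lambda))^3\lesssim_\epsilon(\det(\Lambda))^\epsilon$, and observe that $\prod_{p\mid\det(\Lambda)}p^{o_p(g)}=g$ because $g\mid\det(\Lambda)$ (expanding $\det(\Lambda)$ along a row exhibits it as a $\Z$-combination of $2\times2$ minors, each divisible by $g$). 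Feeding this into \eqref{Snew1863} — whose determinant factor equals $1$ since $m=n+1$ — together with Proposition \ref{propnondiv} for the primes not dividing $\det(\Lambda)$, gives $A(I_4,\Lambda)\lesssim_\epsilon(\det(\Lambda))^\epsilon g$, which is the claim.

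The only step I expect to require genuine care is the first one: one must check not merely that $\beta_2+\beta_3$ is bounded by $o_p$ of some $2\times2$ minor supported on the first two rows, but that after also exploiting the row/column permutations the three resulting bounds together see \emph{every} $2\times2$ minor, so that one recovers $o_p(g)$ and not just $o_p$ of a partial gcd. Everything else — counting lattice points in the simplex, the divisor bound $d(T)\lesssim_\epsilon T^\epsilon$, and the product over the $O(\log\det(\Lambda)/\log\log\det(\Lambda))$ ramified primes — is routine and parallels the $n=2$ argument verbatim.
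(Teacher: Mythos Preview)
Your proposal is correct and follows essentially the same approach as the paper: the paper's proof consists of the single instruction to use $\beta_2\le l_2$ and $\beta_3\le \min_{|A|=2}o_p(\mu_{\{1,2\},A})-l_1-l_2$ together with its symmetric versions, and then to proceed as in Corollary~\ref{Con=2}, which is exactly what you do. Your write-up simply fills in the details (the $O(a^3)$ count of admissible triples, the observation that $g\mid\det(\Lambda)$ via cofactor expansion, and the passage from the three permuted bounds to $o_p(g)$) that the paper leaves implicit.
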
	
\begin{proof}
Use the bound $$\beta_2(l_1,l_2,l_3)\le l_2$$ and $$\beta_3(l_1,l_2,l_3)\le \min_{|A|=2}o_p(\mu_{\{1,2\},A})-l_1-l_2$$
(and its symmetric versions). The rest is the same as in Corollary \ref{Con=2}. 

\end{proof}

In the next sections we present three applications of these corollaries.

\section{Uneven Parsell-Vinogradov sums}

Our first application concerns an essentially  sharp estimate for the eigth moment of the  quadratic  Parsell--Vinogradov sums, in the general case when $N$ and $M$ are unrelated. The special case $N\sim M$ was proved in \cite{BoDe1}, as a consequence of the $l^8(L^8)$ decoupling for the surface
$$(t,s,ts,t^2,s^2).$$

\begin{theorem}

	For each $N,M\ge 1$	
	$$\|\sum_{n=1}^N\sum_{m=1}^Me(nx_1+mx_2+nmx_3+n^2x_4+m^2x_5)\|^8_{L^8([0,1]^5)}\lesssim(NM)^{1+\epsilon}(N^3M^3+N^4+M^4).$$	
\end{theorem}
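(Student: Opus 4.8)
The plan is to interpret the eighth moment as a lattice-point count and feed it into Corollary~\ref{Con=3}. Put $v_j=(n_j,m_j)^{*}\in\Z^2$ and $B=\{1,\dots,N\}\times\{1,\dots,M\}$. By orthogonality the left-hand side equals the number $I$ of $(v_1,\dots,v_8)\in B^8$ satisfying
$$\sum_{j=1}^4 v_j=\sum_{j=5}^8 v_j,\qquad \sum_{j=1}^4 v_jv_j^{*}=\sum_{j=5}^8 v_jv_j^{*},$$
the vector identity encoding the two linear equations (in $x_1,x_2$) and the matrix identity the three quadratic ones (in $x_3,x_4,x_5$; here $v_jv_j^{*}$ is the symmetric $2\times2$ matrix with entries $n_j^2,n_jm_j,m_j^2$). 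With $w:=\sum_1^4 v_j$ and $Q:=\sum_1^4 v_jv_j^{*}$ the common values, $I=\sum_{w,Q}N(w,Q)^2$, where $N(w,Q)$ counts the quadruples $(v_1,\dots,v_4)\in B^4$ producing $(w,Q)$.

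Next, split a quadruple according to whether its four points are collinear, $N=N_{\mathrm{gen}}+N_{\mathrm{col}}$, so that $I\le 2\sum_{w,Q}N_{\mathrm{gen}}(w,Q)^2+2\sum_{w,Q}N_{\mathrm{col}}(w,Q)^2=:2I_{\mathrm{gen}}+2I_{\mathrm{col}}$. If $(v_1,\dots,v_4)$ is not collinear, the $4\times3$ integer matrix $\widetilde L$ with rows $(1,n_j,m_j)$ has rank $3$, so $\widetilde Q:=\widetilde L^{*}\widetilde L=\begin{pmatrix}4 & w^{*}\\ w & Q\end{pmatrix}$ is positive definite; since the quadruple is read off from $\widetilde L$, we get $N_{\mathrm{gen}}(w,Q)\le A(I_4,\widetilde Q)$. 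As $\det\widetilde Q\lesssim(NM)^2$ (bounding entries crudely), Corollary~\ref{Con=3} gives $N_{\mathrm{gen}}(w,Q)\lesssim_\epsilon (NM)^\epsilon g(\widetilde Q)$, with $g(\widetilde Q)$ the gcd of the $2\times2$ minors of $\widetilde Q$; hence
$$I_{\mathrm{gen}}\le\sum_{w,Q}N_{\mathrm{gen}}(w,Q)\,A(I_4,\widetilde Q)\lesssim_\epsilon (NM)^\epsilon\sum_{(v_1,\dots,v_4)\in B^4}g(\widetilde Q).$$
Now $g(\widetilde Q)$ divides the diagonal minors $D_n:=4\sum n_j^2-(\sum n_j)^2=\sum_{i<j}(n_i-n_j)^2$ and $D_m:=\sum_{i<j}(m_i-m_j)^2$, which are positive on a non-collinear quadruple and depend only on $(n_1,\dots,n_4)$, resp.\ $(m_1,\dots,m_4)$; using $\gcd(a,b)=\sum_{k\mid\gcd(a,b)}\phi(k)$ reduces matters to counting $n$-tuples with $k\mid D_n$. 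Passing to the differences $n_i-n_1$ turns $D_n$ into a \emph{positive definite} ternary quadratic form, giving $\#\{(n_1,\dots,n_4)\in\{1,\dots,N\}^4: k\mid D_n\}\lesssim_\epsilon N^\epsilon(N^4/k+N^2)$ (and the analogue for $M$); since $D_n,D_m>0$ forces $k\lesssim\min(N,M)^2$, summing over $k$ yields $\sum_{(v_j)}g(\widetilde Q)\lesssim_\epsilon (NM)^\epsilon N^4M^4$, hence $I_{\mathrm{gen}}\lesssim_\epsilon (NM)^{1+\epsilon}N^3M^3$.

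The term $I_{\mathrm{col}}$ — tuples in which both $(v_1,\dots,v_4)$ and $(v_5,\dots,v_8)$ are collinear — is the crux. Combining $\sum_1^4 n_j=\sum_5^8 n_j$, $\sum_1^4 n_j^2=\sum_5^8 n_j^2$ (and their $m$-analogues) with the equality case of Cauchy--Schwarz, one checks that (apart from the trivial family $v_1=\dots=v_8$) every such configuration falls into one of three classes: (A) all $n_j$ equal; (B) all $m_j$ equal; (C) all eight points lie on a single line that is neither vertical nor horizontal. The passage to (C) is the main structural point: writing $m_j=\alpha_i n_j+\beta_i$ on the two lines $\ell_1,\ell_2$ and substituting into the equations forces $\alpha_1=\alpha_2$ and $\beta_1=\beta_2$, so $\ell_1=\ell_2$. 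In case (A), fixing the common value of the $n_j$ ($\le N$ choices) leaves the one-variable system $\sum_1^4 m_j=\sum_5^8 m_j$, $\sum_1^4 m_j^2=\sum_5^8 m_j^2$, which has $\lesssim_\epsilon M^{5+\epsilon}$ solutions by the classical Vinogradov mean value bound; thus (A) contributes $\lesssim_\epsilon (NM)^{1+\epsilon}M^4$, and (B) likewise $\lesssim_\epsilon (NM)^{1+\epsilon}N^4$. In case (C), parametrizing the common line by $v_j=v_{*}+t_j(q,p)$ with $(q,p)$ primitive and the $t_j$ in an interval of length $L\lesssim\min(N/|q|,M/|p|)$, the equations collapse to $\sum_1^4 t_j=\sum_5^8 t_j$, $\sum_1^4 t_j^2=\sum_5^8 t_j^2$, contributing $\lesssim_\epsilon L^{5+\epsilon}$ per line; summing over primitive directions $(q,p)$ and over the $O(|p|N+|q|M)$ lines in each direction that meet $B$ gives $\lesssim_\epsilon (NM)^\epsilon(N^4M^2+N^2M^4)\le (NM)^{1+\epsilon}N^3M^3$. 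Combining the three estimates with the bound on $I_{\mathrm{gen}}$ yields the theorem. The hardest parts are the classification in the collinear case (especially reaching (C)) and confirming that the positive-definite ternary form count for $I_{\mathrm{gen}}$ really gains the full factor $NM$ over the trivial bound.
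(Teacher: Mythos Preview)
Your argument is correct, but it follows a genuinely different route from the paper's. The paper first centers the quadruples (forcing $\sum n_j=\sum m_j=0$), then uses the substitution $(n_1+n_2,n_2+n_3,n_3+n_1)\mapsto(n_1,n_2,n_3)$ to reduce to a $3\times 2$ system $\L^{*}I_3\L=\Lambda$ and invokes Corollary~\ref{Con=2}; the degenerate case there is the \emph{rank drop} $C_1D_1=E_1^2$, handled by a short Cauchy--Schwarz/line argument, and the main term is $\sum\gcd(C_1,D_1,E_1)^2$ over the $O(N^2M^2\cdot NM)$ triples. You instead keep all four points, append a column of $1$'s to form the $4\times3$ matrix $\widetilde L$, and apply Corollary~\ref{Con=3} to the $3\times3$ Gram matrix $\widetilde Q$; your degenerate case is geometric collinearity, which you resolve via the (A)/(B)/(C) classification, and your main term is $\sum_{(v_j)} g(\widetilde Q)$, controlled through the diagonal minors $D_n,D_m$ and the ternary representation bound $r_Q(D)\lesssim_\epsilon D^{1/2+\epsilon}$. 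Both degeneracies are the same phenomenon ($\operatorname{rank}\widetilde L<3$), viewed from different angles. The paper's path is slicker---the change of variables and the $n=2$ corollary keep the bookkeeping minimal---while yours is more structural and avoids the clever substitution, at the cost of the collinear classification and the (standard but not entirely trivial) ternary-form input. One small point worth making explicit in your write-up: the bound $\#\{(n_j):k\mid D_n\}\lesssim_\epsilon N^{\epsilon}(N^4/k+N^2)$ rests on $r_Q(D)\lesssim_\epsilon D^{1/2+\epsilon}$ for the fixed positive definite ternary form $Q=4I-J$, which you should state (it follows, e.g., by writing $4D_n=\sum (4n_j-S_n)^2$ on the hyperplane $\sum=0$ and invoking the classical bound for lattice points on a $2$-sphere).
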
	
\begin{proof}
The left hand side represents the number of integral solutions $n_1,\ldots,n_8\in [1,N]$, $m_1,\ldots, m_8\in[1,M]$ of the system
\begin{equation}
\label{fiyure8t58ut85u}
\begin{cases}
n_1+\ldots+n_4=n_5+\ldots+n_8\\
m_1+\ldots+m_4=m_5+\ldots+m_8\\
n_1^2+\ldots+n_4^2=n_5^2+\ldots+n_8^2\\
m_1^2+\ldots+m_4^2=m_5^2+\ldots+m_8^2\\
n_1m_1+\ldots+n_4m_4=n_5m_5+\ldots+n_8m_8
\end{cases}.
\end{equation}
Let us start by proving that, apart from the small multiplicative term $(NM)^\epsilon$, the proposed upper bound is sharp. First, the system \eqref{fiyure8t58ut85u} is easily seen to have $\sim (NM)^4$ trivial solutions, those satisfying $n_i=n_{i+4}$ and $m_i=m_{i+4}$ for each  $1\le i\le 4$.
Second, for each $1\le m\le M$ the number of solutions with all $m_i$ equal to $m$ is given by 
$$\|\sum_{n=1}^Ne(nx+n^2y)\|_{L^8([0,1]^2)}^8\sim N^5.$$ 
Thus, \eqref{fiyure8t58ut85u} has at least $MN^5+NM^5$ solutions.

Let us next prove the upper bound.  For positive integers $A\lesssim N$, $B\lesssim M$, $C\lesssim N^2$, $D\lesssim M^2$, $E\lesssim NM$, we note that  the number of integral solutions of the system 
$$
\begin{cases}
n_1+\ldots+n_4=A\\
m_1+\ldots+m_4=B\\
n_1^2+\ldots+n_4^2=C\\
m_1^2+\ldots+m_4^2=D\\
n_1m_1+\ldots+n_4m_4=E

\end{cases}
$$
is smaller than the number $N_{A,B,C,D,E}$ of solutions of the following system
$$
\begin{cases}
n_1+\ldots+n_4=4A\\
m_1+\ldots+m_4=4B\\
n_1^2+\ldots+n_4^2=16C\\
m_1^2+\ldots+m_4^2=16D\\
n_1m_1+\ldots+n_4m_4=16E

\end{cases}.$$
The number of solutions of \eqref{fiyure8t58ut85u} is clearly dominated by
\begin{equation}
\label{jiut65p0y07u0-8-1}
\sum_{A\lesssim N}\sum_{B\lesssim M}\sum_{C\lesssim  N^2}\sum_{D\lesssim M^2}\sum_{E\lesssim NM}N_{A,B,C,D,E}^2.
\end{equation}
We also note that $N_{A,B,C,D,E}$ is equal to the number $N_{C_1,D_1,E_1}$ of solutions for the system 
\begin{equation}
\label{sy1}
\begin{cases}
n_1+\ldots+n_4=0\\
m_1+\ldots+m_4=0\\
n_1^2+\ldots+n_4^2=C_1\\
m_1^2+\ldots+m_4^2=D_1\\
n_1m_1+\ldots+n_4m_4=E_1
\end{cases}
\end{equation}
where 
$$
\begin{cases}
C_1=16C-{4A^2}\\
D_1=16D-{4B^2}\\
E_1=16E-{4AB}
\end{cases}.
$$
So
$$\sum_{A,B,C,D,E}N_{A,B,C,D,E}^2\lesssim$$ $$\sum_{C_1=O(N^2)\atop{D_1=O(M^2)\atop{E_1=O(NM)}}}N_{C_1,D_1,E_1}^2|\{(A,B,C,D,E):\;C_1=16C-{4A^2},\;
D_1=16D-{4B^2},\;
E_1=16E-{4AB}\}|.$$
\\
\\
Note that for each such $C_1,D_1,E_1$ we have
$$|\{(A,B,C,D,E):\;A=O(N),B=O(M),C_1=16C-{4A^2},
D_1=16D-{4B^2},
E_1=16E-{4AB}\}|$$$$\lesssim NM.$$
It follows that 
$$\sum_{A,B,C,D,E}N_{A,B,C,D,E}^2\lesssim NM\sum_{C_1,D_1,E_1}N_{C_1,D_1,E_1}^2.$$
The system \eqref{sy1} can be rewritten as follows 
$$\begin{cases}
n_1^2+n_2^2+n_3^2+(n_1+n_2+n_3)^2=C_1\\
m_1^2+m_2^2+m_3^2+(m_1+m_2+m_3)^2=D_1\\
n_1m_1+n_2m_2+n_3m_3+(n_1+n_2+n_3)(m_1+m_2+m_3)=E_1
\end{cases}.
$$
This  has at most as many solutions as the system 
\begin{equation}
\label{sy2}
\begin{cases}
n_1^2+n_2^2+n_3^2=C_1\\
m_1^2+m_2^2+m_3^2=D_1\\
n_1m_1+n_2m_2+n_3m_3=E_1
\end{cases}.
\end{equation}
Indeed, this can be seen by changing variables 
$$\begin{cases}
(n_1+n_2,n_2+n_3,n_3+n_1)\mapsto (n_1,n_2,n_3)\\
(m_1+m_2,m_2+m_3,m_3+m_1)\mapsto (m_1,m_2,m_3)
\end{cases}.$$
On the other hand, \eqref{sy2} is equivalent with
$$
\begin{bmatrix}n_1&n_2&n_3\\m_1&m_2&m_3\end{bmatrix}I_3\begin{bmatrix}n_1&m_1\\n_2&m_2\\n_3&m_3\end{bmatrix}=\begin{bmatrix}C_1&E_1\\E_1&D_1\end{bmatrix}.
$$
Let us analyze the number $\tilde{N}_{C_1,D_1,E_1}$ of solutions of this system. We start with the singular case, when $C_1D_1=E_1^2$. In this case the number of solutions can be estimated directly. We split the discussion into two subcases.

First, if both $C_1,D_1\not=0$, we have equality in the Cauchy--Schwarz inequality. Thus,  $(n_1,n_2,n_3)=\lambda(m_1,m_2,m_3)$ for some nonzero rational $\lambda$. Thus, $(n_1,n_2,n_3)=\lambda_1(a,b,c)$ and $(m_1,m_2,m_3)=\lambda_2(a,b,c)$ for some integers $\lambda_1,\lambda_2,a,b,c$ satisfying $\gcd(a,b,c)=1$. We estimate, adding over all admissible  lines
\begin{align*}
\sum_{0<|C_1|\lesssim N^2}\sum_{0<|D_1|\lesssim M^2}\tilde{N}^2_{C_1,D_1,(C_1D_1)^{1/2}}&\le (\sum_{0<|C_1|\lesssim N^2}\sum_{0<|D_1|\lesssim M^2}\tilde{N}_{C_1,D_1,(C_1D_1)^{1/2}})^2\\&\le (\sum_{|a|,|b|,|c|\lesssim \min(N,M)\atop{\gcd(a,b,c)=1}}\frac{N}{\max(a,b,c)}\frac{M}{\max(a,b,c)})^2
\\&\le (\sum_{|a|,|b|,|c|\lesssim \min(N,M)}\frac{NM}{\max(a,b,c)^2})^2\\&\lesssim (NM\min(N,M))^2.
\end{align*}
When $D_1=0$, \eqref{sy2} boils down to one equation $n_1^2+n_2^2+n_3^2=C_1$. We estimate
$$
\sum_{0<|C_1|\lesssim N^2}\tilde{N}^2_{C_1,0,0}\le \sum_{0<|C_1|\lesssim N^2}(C_1)^{1+\epsilon}\lesssim N^{4+\epsilon}
$$

This proves that the contribution from the singular case satisfies
\begin{equation}
\label{jiut65p0y07u0-8-}
NM\sum_{C_1\lesssim N^2}\sum_{D_1\lesssim M^2}\tilde{N}_{C_1,D_1,(C_1D_1)^{1/2}}^2\lesssim_\epsilon (NM)^{1+\epsilon}(N^4+M^4+(NM\min(N,M))^2).
\end{equation}
\smallskip

Let us  assume that $C_1D_1\not= E_1^2$. Corollary \ref{Con=2} shows that 
$$\tilde{N}_{C_1,D_1,E_1}\lesssim_\epsilon (NM)^{\epsilon}\operatorname{gcd}(C_1,D_1,E_1).$$
Fix $\lambda\lesssim \min(N^2,M^2)$. The number of triples $(C_1,D_1,E_1)$ with $C_1\lesssim N^2$, $D_1\lesssim M^2$, $E_1\lesssim NM$, such that $\lambda=\operatorname{gcd}(C_1,D_1,E_1)$ is trivially dominated by
$$ \frac{N^2}{\lambda}\frac{M^2}{\lambda}\frac{NM}{\lambda}=\frac{N^3M^3}{\lambda^3}.$$
Using these, the contribution from the nonsingular case can be dominated by
\begin{align*}
NM\sum_{\lambda\lesssim N^2}\sum_{C_1,D_1,E_1\atop{\operatorname{gcd}(C_1,D_1,E_1)=\lambda}}\tilde{N}_{C_1,D_1,E_1}^2&\lesssim (NM)^{1+\epsilon}\sum_{\lambda\lesssim \min(N^2,M^2)}\sum_{C_1,D_1,E_1\atop{\operatorname{gcd}(C_1,D_1,E_1)=\lambda}}\lambda^2 \\&\lesssim (NM)^{4+\epsilon}\sum_{\lambda\lesssim \min(N^2,M^2)}\frac1\lambda\\&\lesssim_\epsilon (NM)^{4+\epsilon}.
\end{align*}
This together with  \eqref{jiut65p0y07u0-8-} completes the proof of our result, as $\min(NM)^2\le NM.$

\end{proof}
\section{Non-congruent tetrahedra in the truncated lattice $[0,q]^3\cap \Z^3$}
\label{sec9}

Our goal here is to answer the following question asked in \cite{GILP}.
\begin{question}
	Let $T_3([0,q]^3\cap \Z^3)$ denote the collection of all equivalence classes of congruent tetrahedra with vertices in $[0,q]^3\cap \Z^3$. Is there a  $\delta>0$ and some $C>0$, both independent of $q$ such that
	$$\#T_3([0,q]^3\cap \Z^3)\le Cq^{9-\delta}$$
	for each $q>1$?
\end{question}
A positive answer to this question would have implications on producing lower bounds for the Falconer distance-type problem for tetrahedra. We refer to \cite{GILP} for details on this interesting problem.

Here we give a negative answer to this question. We hope that our approach to answering this question will inspire further constructions which might eventually improve the lower bound for the Falconer-type problem.
\begin{theorem}
	\label{mainthmmmm}
	We have for each $\epsilon>0$ and each $q>1$
	$$\#T_3([0,q]^3\cap \Z^3)\gtrsim_{\epsilon} q^{9-\epsilon}.$$
\end{theorem}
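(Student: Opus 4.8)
The plan is to produce at least $q^{9-\epsilon}$ pairwise non-congruent tetrahedra by counting carefully and controlling over-counting via the corollaries of the Siegel mass formula. A tetrahedron in $[0,q]^3\cap\Z^3$ is determined by four vertices $v_0,v_1,v_2,v_3$, and after translating we may assume $v_0=\0$, so it is given by three vectors $\x_1,\x_2,\x_3\in[0,q]^3\cap\Z^3$ (roughly $q^9$ choices). The congruence class of the tetrahedron is determined by the Gram matrix $\Lambda=(\langle\x_i,\x_j\rangle)_{1\le i,j\le 3}$ together with an orientation bit; conversely, two configurations $(\x_1,\x_2,\x_3)$ and $(\y_1,\y_2,\y_3)$ of full rank have the same Gram matrix iff they differ by an element of $O(3,\R)$, and if both have integer entries this forces, essentially, an integral orthogonal-type relation. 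The key point is that the number of integral matrices $\x$ with a \emph{given} positive-definite Gram matrix $\Lambda=\x^*\x$ (i.e. $\x^*I_3\x=\Lambda$, with $\x\in M_{3,3}(\Z)$, $m=n+1$ failing here — so one must be slightly careful: here $m=n=3$) is controlled; but in our situation we have $3$ vectors in $\R^3$, so $m=n=3$, not $m=n+1$. To bring Corollary \ref{Con=3} to bear, I would instead embed the three vectors in $\R^4$, or equivalently only use three \emph{of the four} vertices at a time is not enough; rather, the natural move is: the Gram matrix of the three edge-vectors $\x_1,\x_2,\x_3$ is a $3\times3$ matrix $\Lambda$, and the number of $\z\in M_{4,3}(\Z)$ with $\z^*\z=\Lambda$ is what Corollary \ref{Con=3} bounds, which dominates the number of $\x\in M_{3,3}(\Z)$ with $\x^*\x=\Lambda$. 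Hence the number of congruence classes is at least
\begin{equation*}
\#T_3([0,q]^3\cap\Z^3)\;\gtrsim\;\sum_{\Lambda}\,1\;\ge\;\frac{\#\{\x\in([0,q]^3\cap\Z^3)^3:\;\det\x\neq0\}}{\max_{\Lambda}A(I_4,\Lambda)},
\end{equation*}
where $\Lambda$ ranges over the Gram matrices that actually occur.

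Next I would estimate the two quantities on the right. The numerator is $\gtrsim q^9$: the number of triples $(\x_1,\x_2,\x_3)$ in $[0,q]^3\cap\Z^3$ is $\asymp q^9$, and the proportion with $\det=0$ (i.e. the three vectors coplanar through the origin) is $O(q^{-1})$ or so, hence negligible. For the denominator, Corollary \ref{Con=3} gives
\begin{equation*}
A(I_4,\Lambda)\;\lesssim_\epsilon\;(\det\Lambda)^{\epsilon}\cdot\gcd\big(\mu_{A,B}:\;|A|=|B|=2\big),
\end{equation*}
and for $\x\in([0,q]^3\cap\Z^3)^3$ we have $\det\Lambda=(\det\x)^2\lesssim q^6$, so $(\det\Lambda)^\epsilon\lesssim q^\epsilon$. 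The remaining issue is the $\gcd$ of the $2\times2$ minors of $\Lambda$. Generically this $\gcd$ is $1$ or bounded, so for "most" $\Lambda$ we get $A(I_4,\Lambda)\lesssim_\epsilon q^\epsilon$ and the argument closes with $\#T_3\gtrsim q^{9-\epsilon}$. So the real work is to show that the contribution of the $\x$'s for which this $\gcd$ is large is also under control: one wants
\begin{equation*}
\sum_{\x\in([0,q]^3\cap\Z^3)^3}\gcd\big(\mu_{A,B}(\x^*\x):\;|A|=|B|=2\big)\;\lesssim_\epsilon\;q^{9+\epsilon},
\end{equation*}
so that the count of \emph{distinct} Gram matrices, each with multiplicity $\le A(I_4,\Lambda)\lesssim_\epsilon q^\epsilon\gcd(\cdots)$, still forces $\gtrsim q^{9-\epsilon}$ classes. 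To prove this gcd-moment bound I would fix a modulus $d$, bound the number of $\x$ with $d\mid\gcd(\mu_{A,B})$ by noting that $d$ divides all $2\times2$ minors of the $3\times3$ integer matrix $\Lambda$, which is a restrictive congruence condition; by the dyadic/divisor bookkeeping used in Corollary \ref{Con=2} (splitting over $d$, using $\sum 1/d<\infty$ after normalizing) one wins a factor and recovers $q^{9+\epsilon}$.

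The step I expect to be the main obstacle is precisely this control of $\gcd(\mu_{A,B})$: one must show that integer configurations $\x$ whose edge-Gram matrix has all $2\times2$ minors sharing a large common factor are rare enough — quantitatively, that the number of $\x\in([0,q]^3\cap\Z^3)^3$ with $d\mid\mu_{A,B}(\x^*\x)$ for all $A,B$ is $\lesssim q^{9+\epsilon}/d$ (or at least $q^{9+\epsilon}/d^{1-\epsilon}$). The subtlety is that $\mu_{A,B}(\x^*\x)$ is a quadratic expression in the entries of $\x$ (indeed $\det\Lambda=(\det\x)^2$, and the cofactors of $\Lambda$ are quadratic in the cofactors of $\x$), so "$d$ divides all these minors" does not immediately translate into a linear congruence on $\x$; I would handle it by working prime-by-prime and analyzing, over $\Z/p^k$, the structure of triples of vectors whose pairwise inner products and their cofactors all vanish mod $p^k$ — this is where the local densities reappear and where one may simply re-invoke Proposition \ref{thecaseofpdivisor} directly rather than only its corollary, matching the same bookkeeping $C^{O(\log\det\Lambda/\log\log\det\Lambda)}d(\det\Lambda)^{O(1)}\lesssim_\epsilon(\det\Lambda)^\epsilon\lesssim q^\epsilon$ as in Corollary \ref{Con=2}. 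Once the gcd-moment estimate is in hand, combining it with the displayed lower bound on $\#T_3$ and the Cauchy–Schwarz-type bookkeeping yields $\#T_3([0,q]^3\cap\Z^3)\gtrsim_\epsilon q^{9-\epsilon}$, which is the claim.
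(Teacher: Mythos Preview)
Your overall architecture is right: fix a vertex at the origin, parametrize by Gram matrices $\Lambda$, bound the multiplicity $N_\Lambda$ via the Siegel mass formula, and then show that the total mass $\asymp q^9$ cannot be absorbed by few classes. Your embedding $M_{3,3}(\Z)\hookrightarrow M_{4,3}(\Z)$ (pad with a zero row) to invoke Corollary~\ref{Con=3} is legitimate. But the paper takes a different and cleaner route at the crucial step, and the place you flag as the obstacle is exactly where the two diverge.

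Instead of passing to $A(I_4,\Lambda)$ and facing the $\gcd$ of all $2\times 2$ minors of $\Lambda$, the paper reduces to the $n=2$ case: once $\x,\y$ are fixed, the third vector $\z$ is constrained to the intersection of two spheres of radius $O(q)$, i.e.\ a circle, which carries at most $O(q^\epsilon)$ lattice points. Hence $N_\Lambda\lesssim q^\epsilon N'_{\Lambda'}$ where $\Lambda'$ is the upper-left $2\times 2$ block, and Corollary~\ref{Con=2} gives
\[
N_\Lambda\;\lesssim_\epsilon\; q^\epsilon\,\gcd(\Lambda_{1,1},\Lambda_{2,2})\;=\;q^\epsilon\,\gcd(|\x|^2,|\y|^2).
\]
This bound depends only on the \emph{radii} $r_1=|\x|^2$, $r_2=|\y|^2$, not on any quadratic combinations of the entries of $\x$. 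The paper then stratifies by radii: it takes the set $A=\{r\le q^2: M_r\ge q/2\}$ of ``rich'' spheres (with $|A|\gtrsim_\epsilon q^{2-\epsilon}$), and needs only the elementary estimate that most pairs $(r_1,r_2)\in A^2$ have $\gcd(r_1,r_2)\le q^\epsilon$, which follows from the trivial count $\#\{(r_1,r_2)\in[0,q^2]^2:\gcd(r_1,r_2)=d\}\le q^4/d^2$.

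By contrast, your route leaves you needing
\[
\sum_{\x\in([0,q]^3\cap\Z^3)^3}\gcd\bigl(\mu_{A,B}(\x^*\x):|A|=|B|=2\bigr)\;\lesssim_\epsilon\;q^{9+\epsilon},
\]
which you correctly identify as the main hurdle and do not prove. The quantities $\mu_{A,B}(\x^*\x)$ are (via Cauchy--Binet) quadratic forms in the $2\times 2$ minors of $\x$, so the congruence ``$d$ divides all of them'' does not reduce to a density-$1/d$ linear condition on $\x$; your prime-by-prime sketch is plausible but is genuinely more work than anything in the paper. The paper's circle trick sidesteps this entirely and is the idea you are missing.
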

Note the following trivial upper bound, which shows the essential tightness of our result
$$\#T_3([0,q]^3\cap \Z^3)\le Cq^{9}.$$
Indeed,  by translation invariance it suffices to fix one vertex at the origin. The upper bound follows since there are $(q+1)^3$ possibilities for each of the remaining three vertices.
\medskip

\begin{proof}[of Theorem \ref{mainthmmmm}]
	
	As mentioned before, we fix one vertex to be the origin $\0=(0,0,0)$. A class of congruent tetrahedra in $T_3([0,q]^3\cap \Z^3)$ can be identified with a matrix $\Lambda\in M_{3,3}(\Z)$. Namely, the congruence class of the tetrahedron with vertices $\0,\x,\y,\z\in [0,q]^3\cap \Z^3$ is represented by the matrix
	$$\Lambda=\begin{bmatrix}\langle \x,\x\rangle &\langle \x,\y\rangle & \langle \x,\z\rangle\\ \langle \y,\x\rangle &\langle \y,\y\rangle & \langle \y,\z \rangle\\ \langle \z,\x\rangle &\langle \z,\y\rangle & \langle \z,\z\rangle\end{bmatrix}.$$
	A tetrahedron is called non-degenerate if $\x,\y,\z$ are linearly independent. We will implicitly assume the congruence classes correspond to non-degenerate tetrahedra.
	
	We seek for an upper bound on the number $N_{\Lambda}$ of integral solutions $\L=(\x,\y,\z)\in (\Z^3)^3$ to the equation
	$$\L^*\L=\Lambda.$$
	This will represent the number of congruent tetrahedra with  side lengths specified by $\Lambda$.
	
	In the numerology from the Section \ref{sec4},  this corresponds to $n=m=3$. To make the theorems in that section applicable we reduce the counting problem to the $m=3,n=2$ case as follows. One can certainly bound $N_\Lambda$ by $q^{\epsilon} N'_\Lambda$,  where $N'_\Lambda$ is the number of integral solutions $\L'=(\x,\y)\in (\Z^3)^2$ satisfying $$(\L')^*\L'=\Lambda'$$
	and $\Lambda'$ is the $2\times 2$ minor of $\Lambda$ obtained from the first two rows and columns of $\Lambda$.
	Indeed, if $\x,\y$ are fixed, the matrix $\Lambda$ forces $\z$ to lie on the intersection of the sphere of radius $\Lambda_{3,3}^{1/2}$ centered at the origin with, say, a sphere centered at $\x$ whose radius is determined only by the entries of $\Lambda$. These radii are $O(q)$, so the resulting circle can only have $O(q^\epsilon)$ points.
	
	Note also that we only care about those $\Lambda'$ for which there exist $\x,\y\in [0,q]^3\cap \Z^3$ linearly independent, such that
	$$\Lambda'=\begin{bmatrix}\langle \x,\x\rangle &\langle \x,\y\rangle\\ \langle \y,\x\rangle &\langle \y,\y\rangle\end{bmatrix}.$$
	This in particular forces $\Lambda'$ to be positive definite.

	Apply now Corollary \ref{Con=2}.
	This will bound $N'_\Lambda$ by
	$$q^\epsilon \operatorname{gcd}(\Lambda_{i,j}:i,j\not=3)\le q^\epsilon \operatorname{gcd}(\Lambda_{1,1},\Lambda_{2,2}).$$
	Thus
	\begin{equation}
	\label{dfjkgy58g0-riu0g5690i}
	N_\Lambda\lesssim_\epsilon q^\epsilon \operatorname{gcd}(\Lambda_{1,1},\Lambda_{2,2}),
	\end{equation}
	for each $\Lambda$ corresponding to a non-degenerate tetrahedron.
	
	Denote by $M_r$  the number of lattice points on the sphere or radius $r^{1/2}$ centered at the origin. In our case $r\le q^2$ so we know that $M_r\lesssim_\epsilon q^{1+\epsilon}$. We need to work with spheres that contain many points. Let
	$$A:=\{r\le q^2:M_r\ge q/2\}.$$
	Since for each $\epsilon>0$ we have $M_r\le C_\epsilon q^{1+\epsilon}$, a double counting argument shows that  $q^3\le  C_\epsilon\#Aq^{1+\epsilon}+\frac12q^2q$. Thus $\#A\gtrsim_\epsilon q^{2-\epsilon}$.
	
	Note that for each $r_i\in A$ there are $\sim M_{r_1}M_{r_2}M_{r_3}$ non-degenerate tetrahedrons with vertices $\x,\y,\z$ on the spheres centered at the origin and with radii $r_1^{1/2},r_2^{1/2},r_3^{1/2}$ respectively. The congruence class of such a tetrahedron contains
	$$\lesssim_\epsilon q^\epsilon \operatorname{gcd}(r_1,r_2)$$
	elements, according to \eqref{dfjkgy58g0-riu0g5690i}.
	
	We conclude that there are at least
	$$\frac{ M_{r_1}M_{r_2}M_{r_3}}{q^{\epsilon}\operatorname{gcd}(r_1,r_2)}$$
	congruence classes generated by such non-degenerate tetrahedra. As distinct radii necessarily give rise to distinct congruence classes, we obtain the lower  bound
	$$\#T_3([0,q]^3\cap \Z^3)\gtrsim_\epsilon\sum_{r_i\in A}\frac{ M_{r_1}M_{r_2}M_{r_3}}{q^{\epsilon}\operatorname{gcd}(r_1,r_2)}\gtrsim_\epsilon q^{3-\epsilon}\sum_{r_1,r_2,r_3\in A}\frac1{\operatorname{gcd}(r_1,r_2)}.$$

	It is immediate that for each integer $d$ there can be  at most $\frac{q^6}{d^2}$ tuples $(r_1,r_2,r_3)\in [0,q^2]^3$, hence also in $A^3$, with $\operatorname{gcd}(r_1,r_2)=d$. Using this observation and the bound $\#(A^3)\ge C_\epsilon q^{6-\epsilon}$, it follows that for each $\epsilon>0$ at least $\frac12\#(A^3)$ among the triples $(r_1,r_2,r_3)\in A^3$ will have  $\operatorname{gcd}(r_1,r_2)\le \frac{10q^\epsilon}{C_\epsilon}$.
	
	This implies that
	$$\sum_{r_1,r_2,r_3\in A}\frac1{\operatorname{gcd}(r_1,r_2)}\gtrsim_\epsilon q^{6-\epsilon},$$
	which finishes the proof of the theorem.
\end{proof}

\section{Distribution of lattice points on caps}
\label{sec8}

Let $n\ge 2$ and $\lambda\ge 1$ be two integers. Define  the lattice points of the sphere
$$\F_{n,\lambda}=\{\xi=(\xi_1,\ldots,\xi_n)\in\Z^n:|\xi_1|^2+\ldots|\xi_n|^2=\lambda\}.$$

It was proved in \cite{BRS} that
$$|\{(\x,\y)\in\Z^3\times\Z^3:\;|\x|^2=|\y|^2=\lambda,\;|\x-\y|<\lambda^{1/4}\}|\lesssim_\epsilon\lambda^{\frac12+\epsilon}.$$
This is a statement on the average distribution of the lattice points on $\F_{3,\lambda}$ in caps of size $\lambda^{1/4}$. Roughly speaking it states that most such caps contain at most $O(\lambda^\epsilon)$ lattice points. It seems reasonable to conjecture that for $n\ge 3$
\begin{equation}
\label{ababeq1}
|\{(\x^1,\ldots,\x^{n-1})\in(\Z^n)^{n-1}:\;|\x^i|^2=\lambda,\;|\x^i-\x^j|<\lambda^{\frac1{2(n-1)}}\text{ for }i\not= j\}|\lesssim_\epsilon\lambda^{\frac{n-2}{2}+\epsilon}.
\end{equation}
We next prove this conjecture for $n=4$.
\smallskip

Note that if $|\x|^2=|\y|^2=\lambda$ then $\x\cdot\y=\lambda-\frac12|\x-\y|^2$. Denote by $X$ the $(n-1)\times n$ matrix with entries $x_{ij}=x^i_j$, the latter being the $j^{th}$ entry of $\x^i$. We can thus bound the left hand side from \eqref{ababeq1} by
\begin{equation}
\label{ababeq3}
\sum_{\Lambda}|\{X\in \Z^{(n-1)\times n}:\;XX^{T}=\Lambda\}|,
\end{equation}
with the sum extending over all symmetric $(n-1)\times (n-1)$ matrices with integer entries of the form
\begin{equation}
\label{ababeq2}\begin{cases}
{\Lambda}_{i,i}=\lambda\;\text{ for }1\le i\le n-1&\\
|\lambda-\Lambda_{i,j}|\le \rho^2\;\text{  for }1\le i\not=j\le n.
\end{cases}\end{equation}
We use $\rho=\lambda^{\frac1{2(n-1)}}$.

Replacing $(\x^1,\ldots,\x^{n-1})$ by $(\x^1,\y^2,\ldots,\y^{n-1})$ with $\y^i=\x^i-\x^1$ for $2\le i \le n-1$, an alternative expression for \eqref{ababeq3} is
\begin{equation}
\label{ababeq4}
\sum_{\Lambda'}|\{X'\in \Z^{(n-1)\times n}:\;X'(X')^{T}=\Lambda'\}|,
\end{equation}
with the sum over symmetric $(n-1)\times (n-1)$ matrices $\Lambda'$ with integer entries of the form
\begin{equation}
\label{ababeq5}\begin{cases}
{\Lambda}_{1,1}'=\lambda&\\
|\Lambda_{i,j}'|\le \rho^2\;\text{  for }i,j\not=1&\\
\Lambda_{i,1}'=\Lambda_{1,i}'=-\frac12\Lambda'_{i,i}\;\text{  for }2\le i\le n-1.
\end{cases}\end{equation}

We will now estimate \eqref{ababeq4} when $n=4$, using the bounds on local densities from Section \ref{sec4}. We assume $\x^1\wedge \x^2\wedge\x^3\not=0$ and leave the other more immediate case to the reader. Note that a typical $\Lambda'$ in our summation has the form
$$\Lambda'=\begin{bmatrix}\lambda&-a&-b\\-a&2a&c\\-b&c&2b\end{bmatrix},$$
with $\det(\Lambda')\not=0$.

Using  Corollary \ref{Con=3} we bound \eqref{ababeq4} by
$$\lesssim_\epsilon\lambda^\epsilon\sum_{|a|,|b|,|c|\le \rho^2}\gcd(\Lambda_{A,B}':\;A,B\subset\{1,2,3\},\;|A|=|B|=2)$$
$$\lesssim_\epsilon\lambda^\epsilon\sum_{|a|,|b|,|c|\le \rho^2}\gcd(a(2\lambda-a),b(2\lambda-b),4ab-c^2)$$
$$\lesssim_\epsilon\lambda^\epsilon \sum_{d\in\D}d|\{(a,b,c):\;|a|,|b|,|c|\le \rho^2,\;d|a(2\lambda-a),\;d|b(2\lambda-b),\;d|4ab-c^2\}|,$$
where $|\D|\lesssim_\epsilon\rho^{2+\epsilon}$ and $\D$ has all elements $O(\rho^4)$.
\medskip

Write $d=d_1d_2$ where $\prod_{p|d_1}p$ divides $\lambda$ and $(d_2,\lambda)=(d_1,d_2)=1$. Let also $d_1^*$ be the smallest number such that $d_1^*|d_1$ and $d_1|(d_1^*)^2$. The Chinese Remainder Theorem shows that $d|a(2\lambda-a)$ determines $a$ modulo $d_1^*d_2$ within at most 2 values. The same holds for $b$. Once $a,b$ are fixed, $c$ is similarly determined modulo $d^*$ where $d^*$ is the smallest number such that $d^*|d$ and $d|(d^*)^2$.  We can refine the estimate for \eqref{ababeq4} as
$$\lambda^\epsilon \sum_{d\in\D}d\sum_{d=d_1d_2\atop{\prod_{p|d_1}p|\lambda}}\sum_{\Delta|d\atop{d|\Delta^2}}\sum_{d_1',d_1''|d_1\atop{d_1|(d_1')^2,(d_1'')^2}}(\frac{\rho^2}{d_1'd_2}+1)(\frac{\rho^2}{d_1''d_2}+1)(\frac{
\rho^2}{\Delta}+1)$$
$$\lesssim_\epsilon\rho^{2+\epsilon}\sum_{d\in\D}d^{1/2}\sum_{d=d_1d_2\atop{\prod_{p|d_1}p|\lambda}}(\frac{\rho^4}{d_1d_2^2}+\frac{\rho^2}{d_1^{1/2}d_2}+1)$$
$$\lesssim_\epsilon \rho^{6+\epsilon}\lesssim_\epsilon \lambda^{1+\epsilon}.$$
This proves the conjectured bound \eqref{ababeq1} for $n=4$.

\end{document}